\DeclareMathOperator{\Tr}{Tr}
\newtheorem*{remark}{Remark}
\newtheorem{prop}{Proposition}
\title{Numerical Bifurcation Analysis of Turing and Symmetry Broken Patterns of a Vegetation PDE Model}
\author{
 Konstantinos Spiliotis \\
  Mathematical Institute, University of Rostock\\
  Rostock, Germany\\
 % \texttt{ziq2@pitt.edu} \\
  %% examples of more authors
   \And
 Lucia Russo* \\
 Istituto di Scienze e Tecnologie \\ 
 per l'Energia e la Mobilit\'a  Sostenibili, CNR\\
              Naples, Italy\\
  *Corresponding author\\
  \texttt{lucia.russo@stems.cnr.it} \\
  \And 
  Francesco Giannino*\\
  Dipartimento di Agraria \\ Universit\'a degli Studi di Napoli Federico II \\
  Naples, Italy \\ *Corresponding author\\
  \texttt{francesco.giannino@unina.it }
 \And 
Constantinos Siettos\\Dipartimento di Matematica \\
e Applicazioni ``Renato Caccioppoli" \\Universit\'a degli Studi di Napoli Federico II \\ Naples, Italy\\
 % \texttt{yul217@pitt.edu} \\
  %% \AND
  %% Coauthor \\
  %% Affiliation \\
  %% Address \\
  %% \texttt{email} \\
  %% \And
  %% Coauthor \\
  %% Affiliation \\
  %% Address \\
  %% \texttt{email} \\
  %% \And
  %% Coauthor \\
  %% Affiliation \\
  %% Address \\
  %% \texttt{email} \\
}
\begin{document}

\maketitle
\begin{abstract}
We study the mechanisms of pattern formation for vegetation dynamics in water-limited regions. Our analysis is based on a set of two partial differential equations (PDEs) of reaction–diffusion type for the biomass and water and one ordinary differential equation (ODE) describing the dependence of the toxicity on the biomass. We perform a linear stability analysis in the one-dimensional finite space, we derive analytically the conditions for the appearance of Turing instability that gives rise to spatio-temporal patterns emanating from the homogeneous solution, and provide its dependence with respect to the size of the domain. Furthermore, we perform a numerical bifurcation analysis in order to study the pattern formation of the inhomogeneous solution, with respect to the precipitation rate, thus analyzing the stability and symmetry properties of the emanating patterns. Based on the numerical bifurcation analysis, we have found new patterns, which form due to the onset of secondary bifurcations from the primary Turing instability, thus giving rise to a multistability of asymmetric solutions.
\end{abstract}

% keywords can be removed
\keywords{Numerical Bifurcation Analysis \and Symmetry Breaking in PDEs \and Turing Instabilities \and Reaction Diffusion Ecological Systems}

%ubclass{65P30 \and 35K57 \and 35B36
% \and 37M20}

\section{Introduction}
It is well known, that the self-organized spatio-temporal patterning of vegetation, especially in water-limited regions, comes as a feedback response to ecosystem stability and species diversity \cite{vincenot2016spatial,zhao2019shaping,callaway2021belowground}.
Thus, the demystification of the mechanisms that pertain to the formation and dynamics of such spatio-temporal vegetation patterns is at the forefront of contemporary ecological and environmental research efforts\cite{kefi2007spatial,tarnita2017theoretical}. An important open research question revolves around the relation between vegetation patterning changes/disturbances and catastrophic/irreversible transitions, both in the environmental landscape and biodiversity. For example, K{\'e}fi et al. \cite{kefi2007spatial} showed that patch-size distributions in arid Mediterranean ecosystems may serve as early-warning signals for the onset of desertification. Bonanomi et al. \cite{bonanomi2014ring} suggested that vegetation rings facilitates the diversity of species. Zhao et al. \cite{zhao2019shaping} showed that the patchy vegetation in salt marsh ecosystems promotes species bio-diversity. Such patterns include but are not limited to stripes, spots, rings, labyrinth-like structures and spiral waves \cite{callaway2021belowground}.\par
%(sui modelli B,W e BWT)
To explain such self-organizing spatio-temporal patterns, various mathematical dynamical models have been used ranging from microscopic models, including stochastic cellular automata \cite{silvertown1992cellular,pascual2002cluster, kefi2007spatial}, and agent-based/individualistic models of deterministic ordinary differential equations 
 (ODEs) \cite{vincenot2016spatial,vincenot2017plant}, to continuum models of partial differential equations (PDEs) \cite{bonanomi2005negative,carteni2012negative,marasco2014vegetation,severino2017effects}. The keystone idea that underpins the above mathematical models is that of the ``scale-dependent feedback'' \cite{rietkerk2008regular} mechanism between species and limited resources. This mechanism is governed by the so-called activator–inhibitor principle introduced by Turing in his  celebrated 1952 paper ``The chemical basis of morphogenesis" \cite{turing1952, Tur90} on the spontaneous formation of patterns in diffusion-reaction systems (see also the discussion in \cite{maini1997spatial,rietkerk2008regular,ball2015forging,krause2021modern}).\par
%(sulla analisi biforcazionale dei modelli)
%However, on the one hand, due to the ``curse of dimensionality" that characterizes detailed microscopic/individual-based models, the inherent stochasticity of such models and importantly the gap between the spatial and temporal scales where the interactions between the individual units occur and the macroscopic pattern formation emerge, renders the systematic numerical bifurcation analysis an overwhelming difficult task. Thus, what is usually done with microscopic models for investigating the spatio-temporal vegetation dynamics is simple brute-force simulation: one sets many initial conditions in the high dimensional phase space (such as the spatial location, type of vegetation and and density of biomass), sets different sets of values for the model parameters (such as the spatial concentration of water and the sensitivity in litter autotoxicity) and runs in time thus potentially measuring statistically expected numerical quantities (such as the pair correlation function) in order to  characterize/classify the dynamics of the emergent behaviour.\par 
What is usually done with such continuum-level vegetation reaction-diffusion PDEs, is temporal simulation and linear stability analysis  (see e.g. \cite{klausmeier1999regular,carteni2012negative,marasco2014vegetation,gowda2016assessing}) of the homogeneous (spatial independent) dynamics \cite{marasco2014vegetation,gowda2016assessing}. However, simple temporal simulations and/or linear stability analysis are not adequate for the investigation of far-from-the-equilibrium nonlinear phenomena. For example, in several studies it has been shown, that Turing instabilities may experience secondary bifurcations leading to far-from-equilibrium oscillating solutions \cite{lamb2006hopf,spiliotis2018analytical}, spatio-temporal chaos \cite{aragon2012nonlinear,banerjee2012turing} and symmetry-breaking bifurcations \cite{barrio2002size}. In such regimes, nonlinearities play a key role not only in stabilizing a pattern, but also in producing unsuspected bifurcations lined with catastrophic transitions \cite{russo2019bautin,spiliotis2018analytical,spiliotis2021analytical}. Thus, to systematically investigate such phenomena systematically, the exploitation of the full arsenal of numerical bifurcation theory is of out-most importance \cite{satnoianu2000turing,henderson2016alternative,russo2019bautin,spiliotis2021analytical}).\par 
Here, we construct the full bifurcation diagram of a  vegetation model consisting of two coupled PDEs describing the dynamics of plant biomass, water concentration according to \cite{klausmeier1999regular} and one ODE describing the dynamics of toxic compounds \cite{carteni2012negative}, with respect to the precipitation rate in the one dimensional finite domain. First, we provide analytical results for the location of Turing bifurcations, also with respect to the size of the domain, by performing a linear stability analysis, thus considering spatial-temporal perturbations of the homogeneous equilibrium state. Furthermore, we perform a numerical bifurcation analysis to track branches of both stable and unstable far-from-the-homogeneous equilibrium patterns, thus finding novel asymmetric patterns that arise due to secondary bifurcations of the initial Turing instability. 
%Such behavior has been reported in other coupled reaction-diffusion PDEs (see e.g. \cite{yang2004symmetric,lamb2006hopf,krause2021modern}). 
This is the first time that such an analysis for such a vegetation model is provided, thus revealing regions of multi-stability and novel symmetric and far-from-the-homogeneous equilibrium asymmetric patterns.

\section{The mathematical model}
The mathematical model analyzed in this paper was proposed by Marasco et al.\cite{marasco2014vegetation} to simulate the dynamics of three state variables, namely, the biomass $B$, the soil water $W$, and the toxic compounds $T$. Indeed, the positive (of water) and negative (of toxicity) feedbacks on plant biomass can explain the occurrence of different vegetation patterns also in non water-limited environmental conditions.

The soil water $W$ ($kg/m^2$) increases uniformly due to  the rain precipitation $p$ and is reduced by the evaporation process at a rate $lW$ and plants transpiration at a rate $rB^2W$. Moreover, the water diffuses in the soil with a diffusion coefficient $D_W$. The plant biomass $B$ ($kg/m^2$) grows at a nonlinear rate $ rB^2W$ according to water availability in the soil and dies due to a natural rate $d$ and an extra loss induced by the presence of toxic compounds $T$. The intensity of toxicity depends on the plant sensibility, here parametrized by the parameter $s$. Plant lateral propagation is modelled by a dispersal term of a diffusion coefficient $D_B$.
Toxic compounds $T$ ($kg/m^2$) are produced by the dead biomass in a fraction $q$ and decay by the decomposition process with a rate $k$, while they are washed out via precipitation with a rate $w$. The lateral movement of $T$ is not considered, assuming that the toxic compounds do not move in the soil.
These processes are formalized by the following system of two PDEs and one ODE.
\begin{equation} 
\begin{split}
B_t & =D_BB_{xx}+cB^2W-(d+sT)B   \\
 W_t & =D_WW_{xx} +p-rB^2W-lW \\
   T_t & =q(d+sT)B-(k+wp)T ,
\end{split}
\label{eq:system}
\end{equation}
With Neumann boundary conditions, i.e.,:
\begin{equation}
 B_x(0,t)= B_x(L,t)=0,  W_x(0,t)= W_x(L,t)=0,  T_x(0,t)= T_x(L,t)=0.
 \label{eq:NBC}
\end{equation}
In this study, the main bifurcation parameter is the precipitation rate while the exact values of the other parameters are given in Table \ref{tab:parameters}
\begin{table}[h]
\centering
\begin{tabular}{ p{2cm}||p{6cm}|p{1cm} }
 \hline
 %\multicolumn{3}{|c|}{Country List} \\
 \hline
 parameter & Description & Values\\
 \hline \hline 
$c$ & Growth rate of biomass $B$  & 0.002\\
$d$ & Death rate of biomass $B$  & 0.01\\
$k$ &Decay rate of toxicity $T$  & 0.01\\
$l$ & Water loss due to evaporation  & 0.01\\
$q$ & Proportion of toxins in dead biomass  & 0.05\\
$r$ & Rate of water uptake  & 0.35\\
$s$ & Sensitivity of plants to toxicity $T$ & 0.2\\
$w$ & Washing out of toxins by precipitation  & 0.001\\
$D_B$ & Diffusion coefficient for Biomass $B$ & 0.01\\
$D_w$ & Diffusion coefficient  for water $W$  & 0.8\\
\hline
$p$ & Precipitation rate (bifurcation parameter)  & [0, 2]\\
 \hline
\end{tabular}
 \caption{Values of model parameters.}
 \label{tab:parameters}
\end{table}

\section{Linear Stability Analysis}
\label{sec:sec3anal}
%\subsection{Steady States homogeneous solutions.}
In the following, we study the dynamics with respect to the precipitation rate parameter $p$. Initially, we seek for homogeneous solutions, setting the space and time derivatives in Eq.\ \eqref{eq:system} equal to zeros, thus obtaining the following nonlinear algebraic system: 
\begin{equation} 
\begin{split}
cB^2W-(d+sT)B & =0 \\
p-rB^2W-lW & =0\\
   q(d+sT)B-(k+wp)T & =0 ,
\end{split}
\label{eq:homzero1}
\end{equation}
%\subsection{Linear analysis out of toxicity}
%In the case of lacking toxicity i.e. $s=0$, eq.\eqref{eq:homzero1} takes a simplest form :
%\begin{equation} 
%\begin{split}
%cB^2W-dB & =0\\
% p-rB^2W-lW  & =0\\
%qdB-(k+wp)T & =0
%\end{split}
%\label{eq:homzeronotox}
%\end{equation}                         
The above system \eqref{eq:homzero1} has a trivial bare soil solution $(B_0,W_0,T_0)=(0,p/l,0)$.
 For a non-bare soil solution, i.e., when $B \neq 0$, we demonstrate the following proposition.
\begin{prop}
\label{prop:prop1}
Let the nonlinear algebraic system \eqref{eq:homzero1}. We define the functions $a_2(p)=sqcp+dr(k+wp)$, $a_1(p)=-(k+wp)cp$ and $a_0=(k+wp)dl$. Then, if the assumption 
\begin{equation}
    a_1^2-4a_0a_2>0 
    \label{eq:assum_prop_1}
\end{equation}
is satisfied, then the system \eqref{eq:system} has two non-bare soil branches of solutions. Furthermore, these two branches bifurcate and disappear when   
\begin{equation}
    a_1^2-4a_0a_2=0.
      \label{eq:assum_prop_1b}
\end{equation}
\end{prop}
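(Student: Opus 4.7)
The plan is to eliminate $T$ and $W$ from system \eqref{eq:homzero1} using the hypothesis $B\neq 0$, reducing the three nonlinear equations to a single quadratic equation in $B$ whose coefficients coincide with $a_2(p),a_1(p),a_0(p)$.

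First I would use the first equation of \eqref{eq:homzero1}: since $B\neq 0$, it gives the relation $d+sT=cBW$. Next, the second equation is linear in $W$, so it yields $W=p/(rB^2+l)$, which is well-defined and positive for any real $B$. Substituting $d+sT=cBW$ into the third equation produces $qcB^2W=(k+wp)T$, and eliminating $T$ between this identity and $T=(cBW-d)/s$ gives $sqcB^2W=(k+wp)(cBW-d)$. Finally, replacing $W$ by $p/(rB^2+l)$ and clearing the denominator produces a polynomial identity in $B$ that, after collecting powers, reads
\begin{equation*}
\bigl[sqcp+dr(k+wp)\bigr]B^{2}-(k+wp)cp\,B+(k+wp)dl=0,
\end{equation*}
that is, $a_2(p)B^2+a_1(p)B+a_0(p)=0$.

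From here the statement follows from the elementary theory of the quadratic: two distinct real roots exist precisely when $a_1^2-4a_0a_2>0$, they coalesce when $a_1^2-4a_0a_2=0$, and they vanish (as real numbers) beyond this equality, which is the claimed bifurcation/disappearance condition. The main obstacle I foresee is not the algebra but the biological admissibility of the two roots: one must verify that both values of $B$ are in fact positive so that they correspond to genuine non-bare-soil equilibria, and that the induced $W$ and $T$ are non-negative. For the model parameters of Table \ref{tab:parameters} all of $c,d,k,l,q,r,s,w,p$ are positive, so $a_2(p)>0$, $a_0(p)>0$ and $a_1(p)<0$; Vieta's formulas then give $B_1+B_2=-a_1/a_2>0$ and $B_1B_2=a_0/a_2>0$, forcing $B_1,B_2>0$. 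Positivity of $W$ is automatic from $W=p/(rB^2+l)$, and positivity of $T$ follows from $T=qcB^2W/(k+wp)$, completing the verification that the discriminant condition indeed governs the existence and coalescence of two physically meaningful non-bare-soil branches.
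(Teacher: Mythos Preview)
Your argument is correct and follows essentially the same route as the paper: eliminate $W$ and $T$ in favor of $B$ using the first two equations, substitute into the third, and arrive at the quadratic $a_2B^2+a_1B+a_0=0$ whose discriminant governs existence and coalescence of the two branches. Your additional positivity check via Vieta's formulas (ensuring $B_{1,2}>0$, $W>0$, $T>0$) is a welcome supplement that the paper's proof omits.
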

\begin{proof}
We express the variables $W, T$ as a function of $B$ as
\begin{equation}
    W=\frac{p}{(rB^2+l)},
    \label{eq:water}
\end{equation}
 \begin{equation}
     T=\frac{1}{s} \left( \frac{cBp}{rB^2+l}-d \right)
     \label{eq:toxicity}
 \end{equation}
 Substituting the above in the third equation of the \eqref{eq:toxicity}, we obtain a second order equation with respect to the biomass $B$. 
\begin{equation} 
F(B,p)=a_2(p)B^2+a_1(p)B+a_0(p) =0 .
\label{eq:biomass}
\end{equation}
In case of a positive discriminant, i.e., for $\Delta = a_1^2-4a_0a_2>0 $, Eq.~\eqref{eq:biomass} has two solutions with respect to the parameter $p$. Specifically, Eq.~\eqref{eq:biomass} defines two branches of a parabola given by
\begin{equation} 
B_{1,2} =\frac{-a_1(p) \pm \sqrt{\Delta(p)}}{2a_2(p)}.
\label{eq:biomass_expil}
\end{equation}
The peak of the parabola results from $\Delta = a_1^2-4a_0a_2=0$.
\end{proof}
\begin{remark}
Substituting the values of the parameters from Table \ref{tab:parameters}, the assumption  Eq. \eqref{eq:assum_prop_1} is satisfied iff $p>p_{c_0}=0.64$, while the second assumption given by Eq.\eqref{eq:assum_prop_1b} is satisfied when $ p_{c_0}=0.64 \implies B=\frac{-a_1(0.64)}{2a_2(0.64)}=0.156$.
\end{remark}

\subsection{Stability  analysis of the homogeneous solution}
In this section we derive the stability conditions for the homogeneous solution, thus studying the existence of Turing bifurcations which mark the onset of dynamical instabilities \cite{Tur90}. Our system given by  Eq.\eqref{eq:toxicity} can be written in a compact form as: 
\begin{equation} 
\mathbf{u}_t=\mathbf{R(u)}+\mathbf{D}\mathbf{u}_{xx}, 
\label{eq:reac_diff}
\end{equation}
where, $\mathbf{u}=(B,W,T)$, $\mathbf{R(\mathbf{u})}=(f,g,h)$ with $f(\mathbf{u})=f(B,W,T)=cB^2W-(d+sT)B$, $g(\mathbf{u})=g(B,W,T)=p-rB^2W-lW $ and
$h(\mathbf{u})=h(B,W,T)= q(d+sT)B-(k+wp)T$. The constant matrix $\mathbf{D}$ is diagonal with its main diagonal containing the diffusion coefficients, i.e.,: 
\begin{equation} 
 \mathbf{D}=\begin{pmatrix}
D_B & 0 & 0\\
0 & D_W & 0\\
0 & 0 & 0\\
\end{pmatrix}
\label{eq:Diff_reac_diff}
\end{equation}
$D_{3,3}=0$ since the third equation of the system \eqref{eq:system} does not contain any diffusion term. Thus, we study the stability of a given homogeneous steady state solution $\mathbf{u_0}=(B_0,W_0,T_0)$. Towards this purpose, we introduce the perturbation $\mathbf{\delta u}= (\delta B,\delta W,\delta T)$ around the steady states, as $\mathbf{u}= \mathbf{u}_0+\mathbf{\delta u}$. Then, substituting the above into Eq.\eqref{eq:reac_diff} and using first order Taylor expansion for the reaction term $\mathbf{R(u)}$, we obtain the following linearized equation of Eq.\eqref{eq:reac_diff} around the steady state:
\begin{equation} 
\mathbf{(\delta u)}_t=\mathbf{D} \mathbf{(\delta u)}_{xx}+\mathbf{J(u_0)}\mathbf{\delta u}.
\label{eq:reac_diff_LInear}
\end{equation}
$\mathbf{J(u_0)}$ is the Jacobian matrix:
\begin{equation} 
 \mathbf{J(u_0)}= \left. \begin{pmatrix}
f_B & f_W & f_T\\
g_B & g_W & g_T\\
h_B & h_W & h_T\\ 
\end{pmatrix}
\right \vert_{u=u_0}.
\label{eq:jac_reac_diff}
\end{equation}
$\mathbf{\delta u}$ should satisfy the Neumann boundary condition \eqref{eq:NBC}, which implies that the $\mathbf{\delta u}$ has the following form 
\begin{equation} 
\mathbf{\delta u}={\mathbf{C} e^{\lambda t}\cos \frac{n\pi x}{L}}.
\label{eq:du_form}
\end{equation}
Then, the second order spatial derivatives (the Laplacian) read
\begin{equation}
    \mathbf{(\delta u)}_{xx}=-\left(\frac{n\pi}{L}\right)^2 \mathbf{\delta u},
\end{equation}
and the time derivative satisfies
  \begin{equation}
    \mathbf{(\delta u)}_{t}=\lambda \mathbf{\delta u}.
\end{equation}                            
Substituting the derivatives in Eq.\eqref{eq:reac_diff_LInear}, we obtain:
  \begin{equation}
   \lambda \mathbf{\delta u}= - \mathbf{D} \left(\frac{n\pi}{L}\right)^2 \mathbf{\delta u},+\mathbf{J(u_0)}\mathbf{\delta u},
\end{equation} 
or 
  \begin{equation}
   \left[ - \mathbf{D} \left(\frac{n\pi}{L}\right)^2+\mathbf{J(u_0)}- \lambda \mathbf{I}_{3}\right] \mathbf{\delta u}=\mathbf{0}.
   \label{eq:eigen}
\end{equation}                              
Eq. \eqref{eq:eigen} defines an eigenvalue-eigenfunction problem for the matrix $\mathbf{A}=- \mathbf{D} \left(\frac{n\pi}{L}\right)^2+\mathbf{J(u_0)}$, and for a nontrivial solution, the following condition must be satisfied 
  \begin{equation}
   \det[\mathbf{A}-\lambda \mathbf{I_3}]=\det \left[ - \mathbf{D} \left(\frac{n\pi}{L}\right)^2+\mathbf{J(u_0)}- \lambda \mathbf{I}_3\right] =\mathbf{0}.
   \label{eq:eigen_det}
\end{equation} 
In our case, the matrix $\mathbf{A}$ reads:
\begin{equation} 
 \mathbf{A}= \left. \begin{pmatrix}
f_B-D_B(\frac{n \pi}{L})^2& f_W & f_T\\
g_B & g_W-D_W(\frac{n \pi}{L})^2& g_T\\
h_B & h_W & h_T\\ 
\end{pmatrix}
\right \vert_{u=u_0},
\label{eq:A_matrix}
\end{equation}
and at the steady state ${u=u_0}$, we get:
\begin{equation} 
 \mathbf{A}=  \begin{pmatrix}
2cB_0W_0-(d+sT_0)-D_B(\frac{n \pi}{L})^2& cB_0^2 & -sB_0\\
-2rB_0W_0 & -rB_0^2-l-D_W(\frac{n \pi}{L})^2& 0\\
q(d+sT_0) & 0 & qsB_0-k-wp\\ 
\end{pmatrix}
%\right \vert_{u=u_0}.
\label{eq:A_matrix_subs}
\end{equation}
Eq.\ \eqref{eq:eigen_det} defines the characteristic equation of matrix $\mathbb{A}$ of third order: 
\begin{equation}
\label{eq:Char_Pol}
\begin{split}
   P(\lambda)& = \lambda^3+c_2 \lambda^2+c_1 \lambda+c_0\\
   & = \lambda^3-\Tr(A)\lambda^2-\frac{1}{2}(\Tr(A^2)-\Tr^2(A))\lambda-\det(A)=0,
\end{split}
\end{equation}
i.e., $c_2=-\Tr(A)$, $c_1=-\frac{1}{2}(\Tr(A^2)-\Tr^2(A))$ and $c_1=-\det(A)$. We state now a general criterion for the stability of the homogeneous solution.
\begin{prop} (Stability criterion)
The homogeneous steady state solution $\mathbf{u_0}=(B_0,W_0,T_0)$ of the reaction diffusion problem of Eq. \eqref{eq:system}, \eqref{eq:NBC} is stable if the following conditions hold:
\begin{equation}
  c_{2}>0,\quad c_{0}>0, \quad c_{2}c_{1}>c_{0}.
    \label{eq:assum_prop_2}
\end{equation}
\end{prop}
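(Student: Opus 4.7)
The plan is to recognise that the stated inequalities are precisely the Routh--Hurwitz conditions guaranteeing that every root of the cubic characteristic polynomial \eqref{eq:Char_Pol} has negative real part. Since the linearised problem \eqref{eq:reac_diff_LInear} decouples across the Fourier modes in \eqref{eq:du_form}, stability of the homogeneous state $\mathbf{u}_0$ is equivalent to asking, for every admissible $n$, that all solutions $\lambda$ of $P(\lambda)=\lambda^3+c_2\lambda^2+c_1\lambda+c_0=0$ lie in the open left half-plane. Translating this geometric condition on the roots into algebraic conditions on $(c_0,c_1,c_2)$ is the whole content of the proposition.

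First I would write the Routh array of $P(\lambda)$ with unit leading coefficient. Its first column reads $1,\ c_2,\ (c_2c_1-c_0)/c_2,\ c_0$. By the Routh--Hurwitz theorem, the number of roots of $P$ with positive real part equals the number of sign changes in this column, so demanding zero sign changes yields precisely $c_2>0$, $c_2c_1-c_0>0$, and $c_0>0$, which is \eqref{eq:assum_prop_2}. Equivalently, the Hurwitz determinants $\Delta_1=c_2$, $\Delta_2=c_2c_1-c_0$, $\Delta_3=c_0\,\Delta_2$ must all be strictly positive, which collapses to the three stated inequalities.

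A cleaner, self-contained alternative is to invoke Vieta's formulas directly: if $\lambda_1,\lambda_2,\lambda_3$ are the roots of $P$, then $c_2=-(\lambda_1+\lambda_2+\lambda_3)$, $c_1=\lambda_1\lambda_2+\lambda_2\lambda_3+\lambda_1\lambda_3$, and $c_0=-\lambda_1\lambda_2\lambda_3$. If all roots lie in the left half-plane, $c_2>0$ and $c_0>0$ follow immediately (treating real roots and complex-conjugate pairs separately). The remaining inequality $c_2c_1>c_0$ is the standard algebraic certificate that no pair of purely imaginary roots $\pm i\omega$ exists: substituting $\lambda=i\omega$ into $P$ and separating real and imaginary parts produces $\omega^2=c_1$ and $c_0=c_2c_1$, so the strict inequality $c_2c_1>c_0$ rules out the Hopf-type crossing and, combined with the sign conditions on $c_0,c_2$, forbids any root from reaching the imaginary axis.

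Regarding difficulty, there is essentially no hard step here, since the statement is the canonical Routh--Hurwitz criterion for a real cubic. The only subtleties worth flagging in the write-up are that (i) the seemingly missing condition $c_1>0$ is \emph{redundant}: from $c_2>0$, $c_0>0$, and $c_2c_1>c_0$ one gets $c_1>c_0/c_2>0$ automatically, so the three inequalities in \eqref{eq:assum_prop_2} form a minimal set; and (ii) the condition must be verified \emph{mode-by-mode} in $n$, since the matrix $\mathbf{A}$ in \eqref{eq:A_matrix_subs} and hence the triple $(c_0,c_1,c_2)$ depends on the wavenumber $n\pi/L$. A Turing bifurcation is then precisely the event in which \eqref{eq:assum_prop_2} fails for some nonzero $n$ while still holding at $n=0$, which is the structure exploited in the subsequent sections.
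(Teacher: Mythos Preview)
Your proposal is correct and follows essentially the same approach as the paper: both reduce stability to all roots of the cubic \eqref{eq:Char_Pol} lying in the left half-plane and then invoke the Routh--Hurwitz criterion to obtain the three inequalities \eqref{eq:assum_prop_2}. The paper's proof is terser---it simply cites Routh--Hurwitz and rewrites the conditions in terms of $\Tr(A)$ and $\det(A)$---whereas you additionally spell out the Routh array, the Hurwitz determinants, a Vieta-based alternative, and the useful remark that $c_1>0$ is implied; all of this is compatible with, and an elaboration of, the paper's argument.
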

\begin{proof}
If for each $n\in \mathbb{N}$, the roots of Eq.\ \eqref{eq:Char_Pol} lie on the negative complex semi-plane, then the homogeneous solution is stable. Otherwise, if one exponent passes the imaginary axis, i.e., if $\Re(\lambda)>0$, the homogeneous solution loses stability and becomes unstable. Using the Routh–Hurwitz stability criterion \cite{Siet15,Son98} the homogeneous solution is stable if and only if $c_{2}>0, c_{0}>0$ and $ c_{2}a_{1}>a_{0}$. These conditions with the help of Eq.\ \eqref{eq:Char_Pol} can be written as:
\begin{equation}
\Tr(A)<0, \det(A)<0, -\frac{1}{2}(\Tr(A^2)-\Tr^2(A))*\Tr(A)>\det(A)
\label{eq:Conditions}
\end{equation}
%\textcolor{red}{Should we write explicitly the above relations? }
\end{proof}

%\subsubsection{The bare soil solution is always stable}
In the case of the trivial bare soil solution, i.e., for $(B_0,W_0,T_0)=(0,p/l,0)$, the proof of stability is trivial.
\begin{prop} 
The bare soil steady state solution $\mathbf{u_0}=(B_0,W_0,T_0)=(0,p/l,0)$
 of the reaction diffusion problem  \eqref{eq:system}, \eqref{eq:NBC} is always stable. 
\end{prop}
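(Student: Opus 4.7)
The plan is to show that, at the bare soil state, the Jacobian matrix $\mathbf{A}$ of Eq.~\eqref{eq:A_matrix_subs} becomes lower triangular with manifestly negative diagonal entries for every Fourier mode $n\in\mathbb{N}$, so that the spectral condition of the previous proposition is automatically satisfied.

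First, I would substitute $B_0=0$, $T_0=0$, $W_0=p/l$ directly into the entries of $\mathbf{A}$. Because every off-diagonal reaction coupling carries a factor of $B_0$, the entries $f_W=cB_0^2$, $f_T=-sB_0$, and $g_B=-2rB_0W_0$ all vanish; together with the structural zeros $g_T=0$ and $h_W=0$ this leaves only $f_B=-d$, $g_W=-l$, $h_B=qd$ and $h_T=-(k+wp)$. After adding the diffusion contributions, $\mathbf{A}$ assumes the lower-triangular form
\begin{equation*}
\mathbf{A}=\begin{pmatrix}
-d-D_B\left(\tfrac{n\pi}{L}\right)^2 & 0 & 0\\
0 & -l-D_W\left(\tfrac{n\pi}{L}\right)^2 & 0\\
qd & 0 & -(k+wp)
\end{pmatrix}.
\end{equation*}

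Next, I would read the three eigenvalues of $\mathbf{A}$ directly off the diagonal: $\lambda_1=-d-D_B(n\pi/L)^2$, $\lambda_2=-l-D_W(n\pi/L)^2$ and $\lambda_3=-(k+wp)$. Under the sign conventions of Table~\ref{tab:parameters} ($d,l,k,D_B,D_W>0$ and $p,w\ge 0$), each of these is strictly negative for every $n\in\mathbb{N}$ and every admissible precipitation rate $p$, so $\Re(\lambda_i)<0$ uniformly in $n$ and $p$. This establishes linear stability of the bare-soil equilibrium against perturbations of any spatial wavelength.

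There is essentially no obstacle beyond bookkeeping: the whole argument rests on the observation that every term in the reaction Jacobian coupling the three components vanishes when $B_0=0$, which decouples the linearised dynamics. An equivalent route, closer in spirit to the previous proposition, would be to verify the Routh--Hurwitz inequalities $c_2>0$, $c_0>0$, $c_2c_1>c_0$ from $\Tr(\mathbf{A})$ and $\det(\mathbf{A})$ in this diagonal form; both paths yield the same conclusion because the characteristic polynomial factors as $\prod_{i=1}^{3}(\lambda-\lambda_i)$ with all $\lambda_i<0$.
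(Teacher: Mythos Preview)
Your proposal is correct and follows essentially the same approach as the paper: substitute the bare-soil values into $\mathbf{A}$, observe that the resulting matrix is lower triangular, read the three negative eigenvalues off the diagonal, and conclude stability for every mode $n$. Your explanation of \emph{why} the off-diagonal entries vanish (each carries a factor of $B_0$) and your remark on the equivalent Routh--Hurwitz route are a bit more explicit than the paper's version, but the argument is the same.
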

\begin{proof}
In this case, the matrix given in \eqref{eq:A_matrix_subs} takes the simple form:
\begin{equation} 
 \mathbf{A}= %\left.
 \begin{pmatrix}
-d-D_B(\frac{n \pi}{L})^2& 0 & 0\\
0 & -l-D_W(\frac{n \pi}{L})^2& 0\\
qd & 0 & -k-wp\\ 
\end{pmatrix}.
%\right \vert_{u=u_0}.
\label{eq:A_matrix_subs_Bare}
\end{equation}
Thus, the eigenvalues of $\mathbf{A}$ are $\lambda_{1,n}=-d-D_B(\frac{n \pi}{L})^2<0$, $\lambda_{2,n} -l-D_W(\frac{n \pi}{L})^2<0$ and  $\lambda_{3,n}=-k-wp<0$ for each $n\in \mathbb{N}$ ($k, w, d, p >0$). Hence, the bare soil solution is always stable. 
\end{proof}

\subsection{Existence of Turing instability}
\label{sec:Tur_exis}
There are many different scenarios where the homogeneous solution loses stability. Since the characteristic polynomial is of third order, we can have one or two or even three real eigenvalues passing the imaginary axis. Another scenario is when two complex eigenvalues pass the imaginary axes. We state the following theorem.
\begin{prop} 
\label{prop:Tur}
The homogeneous steady state solution $\mathbf{u_0}=(B_0,W_0,T_0)$ with $B_0>0$ of the reaction diffusion problem given by Eq. \eqref{eq:system}, \eqref{eq:NBC} loses stability if
\begin{equation}
  c_{0}=-\det(A)=0
    \label{eq:Tur_1}
\end{equation}
\end{prop}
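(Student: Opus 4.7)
The plan is to exploit the Routh--Hurwitz conditions established in the preceding proposition (Eq.\ \eqref{eq:Conditions}) together with the explicit form of the characteristic polynomial in Eq.\ \eqref{eq:Char_Pol}. Since $P(\lambda)=\lambda^{3}+c_{2}\lambda^{2}+c_{1}\lambda+c_{0}$ with $c_{0}=-\det(A)$, one has $P(0)=c_{0}$, so the condition $c_{0}=0$ is equivalent to $\lambda=0$ being a root of the characteristic polynomial, i.e.\ to the matrix $\mathbf{A}=-\mathbf{D}(n\pi/L)^{2}+\mathbf{J(u_0)}$ possessing a zero eigenvalue. This single observation does most of the work.

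From here I would argue that a zero eigenvalue signals loss of stability. The Routh--Hurwitz criterion of Proposition 3 requires the strict inequality $c_{0}>0$; therefore as the bifurcation parameter $p$ is varied and $c_{0}$ passes through zero, a real eigenvalue of $\mathbf{A}$ necessarily touches the imaginary axis at the origin. To show that the crossing is transversal (so that stability is genuinely lost rather than merely marginal), I would differentiate $P(\lambda;p)=0$ implicitly at $\lambda=0$ to obtain
\begin{equation*}
\frac{d\lambda}{dp}\bigg|_{\lambda=0}=-\frac{1}{c_{1}}\frac{\partial c_{0}}{\partial p},
\end{equation*}
which is nonzero provided $\partial c_{0}/\partial p\neq 0$ and $c_{1}\neq 0$ at the bifurcation value. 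Under these generic conditions the real eigenvalue moves from the negative into the positive half-plane, establishing loss of stability.

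Finally, I would specialise this to the Turing scenario. Because the entries $f_{B}-D_{B}(n\pi/L)^{2}$ and $g_{W}-D_{W}(n\pi/L)^{2}$ of $\mathbf{A}$ depend on the mode index $n$, the equation $c_{0}(n,p)=0$ picks out a discrete family of wavenumbers at which the destabilisation occurs for each value of $p$; this is precisely the Turing mechanism, in which the spatially homogeneous state is destabilised by an inhomogeneous Fourier mode. I would also remark briefly that the other two possible violations of Routh--Hurwitz, $c_{2}=0$ and $c_{2}c_{1}=c_{0}$ with $c_{0}>0$, correspond to a pair of purely imaginary eigenvalues (Hopf-type bifurcations) rather than to a real zero eigenvalue, and hence are not covered by this proposition.

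The only real obstacle is verifying the transversality, i.e.\ checking $\partial c_{0}/\partial p\neq 0$ and $c_{1}\neq 0$ at the candidate bifurcation point. This is not a structural difficulty: using Eq.\ \eqref{eq:A_matrix_subs} and the closed-form branches $B_{1,2}(p)$ from Proposition \ref{prop:prop1}, $c_{0}$ and $c_{1}$ are explicit rational functions of $p$ (and of $n$), so the derivative can be evaluated directly and, for the parameter values of Table \ref{tab:parameters}, shown to be nonzero on the relevant branch.
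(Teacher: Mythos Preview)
Your core argument is correct and coincides with the paper's own proof: both rest on the single observation that $P(0)=c_{0}=-\det(A)$, so a zero eigenvalue of $\mathbf{A}$ is equivalent to $c_{0}=0$, which violates the Routh--Hurwitz condition $c_{0}>0$. The paper stops there; your additional transversality computation and the remark distinguishing this case from the Hopf-type violations $c_{2}=0$ or $c_{2}c_{1}=c_{0}$ go beyond what the paper actually proves and are not required for its (rather informal) statement.
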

\begin{proof}
The simplest case of stability loss is when one leading eigenvalue passes the imaginary axis and becomes positive. When  $\lambda =0$, $P(0)=0$. Thus, directly from Eq. \eqref{eq:Char_Pol}, we obtain $c_0=-\det(A)=0$.
\end{proof}
We study the branch of positive biomass solutions, i.e., for $B>0$. The solution $(B_0,W_0,T_0),  B_0>0$ is given from Eq.\ \eqref{eq:water}-\eqref{eq:biomass_expil}. We  simplify the matrix $A$ given in \eqref{eq:A_matrix_subs}, using Eq.\ \eqref{eq:homzero1}. Dividing with $B$ the first equation in \eqref{eq:homzero1},  we obtain $cB_0W_0=d+sT_0$. From the second equation in \ \eqref{eq:homzero1}, we take $ rB_{0}^{2}+l=\frac{p}{W_0}$. Then, the $A$ is simplified to: 
\begin{equation} 
 \mathbf{A}=  \begin{pmatrix}
cB_0W_0-D_B(\frac{n \pi}{L})^2& cB_0^2 & -sB_0\\
-2rB_0W_0 & -\frac{p}{W_0}-D_W(\frac{n \pi}{L})^2& 0\\
qcW_0B_0 & 0 & qsB_0-k-wp\\ 
\end{pmatrix}.
\label{eq:A_matrix_subs2}
\end{equation}
\begin{remark}
In the case of the reaction diffusion problem \eqref{eq:system}, \eqref{eq:NBC}, the assumption in the proposition \ref{prop:Tur} reads:
\begin{equation} 
\begin{split}
 F(p,n,L)=\left( cB_0W_0-D_B (\frac{n \pi}{L})^2\right)\cdot  \left(-\frac{p}{W_0}-D_W(\frac{n \pi}{L})^2\right) \cdot (qsB_0-k-wp) \\ -2rcB_0^3W_0(-qsB_0+k+wp) +sB_0 \cdot\left(-\frac{p}{W_0}-D_W ( \frac{n \pi}{L})^2\right)\cdot qcW_0B_0=0 .
 \end{split}
\label{eq:detgeneq}
\end{equation}
\end{remark}
For constant $L$, Eq.\ \eqref{eq:detgeneq} defines implicitly the parameter $p$ as function of the physical number $n$. Solving Eq.\ \eqref{eq:detgeneq} for each value of $n, n=0,1,2,..$, we obtain the critical values of the parameter $p$. Fig.\ \ref{fig:wave_number}(a) shows the critical values $p_c=p_c(n)$ for $L=8$. For this size of the domain (specimen) only the modes for $n=1$ and $n=2$ result in the existence of a solution, while for $n=0$ and $n>2$ there are no critical values for $p_c$ ($p_c$ should also satisfy the conditions given by. \eqref{eq:assum_prop_1}, \eqref{eq:assum_prop_1b}, i.e., $p_c>0.64$. The first critical value comes for $n=2$ and the first critical precipitation rate is $p_{c_1}=1.14$. The second one comes for $n=1$ and $p_{c_2}=1.06$ (marked with filled circles in Fig.\ \ref{fig:wave_number}(a)).
\begin{figure}[t!]
\begin{center}
\hspace*{-.7cm} 
\begin{picture}(350,170)
\includegraphics[width=12.5cm]{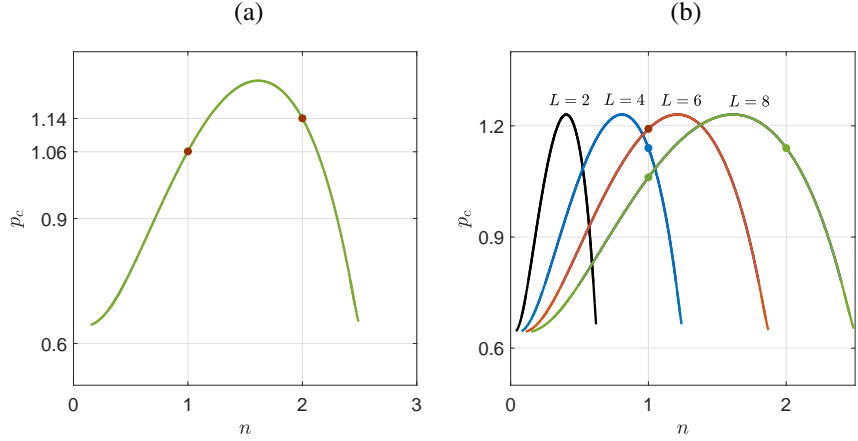}% This is a *.eps file\put(-320,220){(a)}
\put(-260,160){(a)}
\put(-95,160){(b)}
\end{picture}
\end{center}
\caption{Critical values of the precipitation rate $p_c$ as a function of the natural number $n$, according to Eq.\ \eqref{eq:detgeneq}. \textbf{(a)} For $L=8$, the first critical value comes for $n=2$, which results for $p_{c_1}=1.14$. The second critical value results for $n=1$ and $p_{c_2}=1.06$ (both points marked with red filled circles). \textbf{(b)} The existence of critical values for the precipitation rate with respect to the domain size $L$. Here there are three scenarios: for higher values of $L$ (e.g. close to 8), there are two critical values of the precipitation rate $p_c$ (equivalent Turing modes of instability) for $n=1$ and $n=2$. As $L$ decreases, there is one critical value of $p_c$ (for $n=1$) and finally, when $L<L*=2.27$ there is no critical value of $p_c$ giving rise to Turing instability.}
\label{fig:wave_number}            
\end{figure}

\subsection{Size Effect on the Turing Instability}
The Turing eigenstability condition given by Eq.\ \eqref{eq:detgeneq} allow us to investigate the size effect on the multiplicity of the homogeneous solution (with $B>0$). For different values of $L$, we repeat the previous procedure,  for $n, n=0,1,2,..$, thus obtaining the corresponding critical values $p_{c_i}$. Fig.\ \ref{fig:wave_number}(b) shows the critical curves $p_c=p_{c,L}(n)$  for $L=2,4,6,8$. Higher values of $L$ increase the width of the curve, as it is depicted in Fig.\ \ref{fig:wave_number}(b), introducing modes of instability (or equivalent, new types of inhomogeneous solutions). For, $L=8$, there are two critical modes for $n=1$ and $n=2$, while for $L=6$ and $L=4$ there is only one mode of instability at $n=1$. Finally, for $L=2$ there is no instability mode.\par
We can identify the critical size $L=L^*$ where the modes of Turing instability disappear. Demanding $F(p=0.64,n=1,L)=0$ we obtain the critical value $L^*=2.27$. For values $L<L^*$ there are no Turing instabilities and  the upper branch change stability only at $p_{c_0}=0.64$ (see, proposition \ref{prop:prop1}). 

The impact of size $L$ on the system dynamics can be represented in the bifurcation diagram of homogeneous solutions. Fig.\ \ref{fig:bifanalytic} shows the bifurcations with respect to the precipitation parameter $p$, for two cases of the size $L$, one for $L=8>L^*$, Fig.\ \ref{fig:bifanalytic}(a), and one for $L<L^*$, Fig.\ \ref{fig:bifanalytic}(b). As we described in the case of $L=8$ the first critical parameter arises at  $p_{c_1}=1.14$) and then the upper branch looses its stability then, this branch of solutions remain unstable.
In the second case where $L<L^*$ the bifurcation curve is exactly the same, however there is a qualitative difference: since there is no Turing instability mode for $L<L^*$ the upper branch of Fig.\ \ref{fig:bifanalytic}(b) remains stable until the critical point of $p_{c_0}=0.64$ which bifurcates through a saddle node point.\par 
Another information that we gain from the linear analysis is the shape of the solution near the criticality (i.e., near the values $p_{c_2}, p_{c_1}$). The shape also depends on the size $L$. If the first instability arises for $n=2$ (e.g. as in the case of $L=8$), then the solution near the critical value will be $ \mathbf{x}= \mathbf{C}\cdot \cos(\frac{2\pi x}{L})$, with a spatial period $T=L$, which means that the profile is symmetric with respect to $L/2$. Instead, if the first instability appears at $n=1$ (which happens at low specimens $L$, e.g. for $L=6$ or $L=4$, see Fig.\ \ref{fig:wave_number}(b), then the solution (near the criticality) is $\mathbf{x}= \mathbf{C}\cdot \cos(\frac{\pi x}{L})$, with period $T=2L$. In this case, we have the half period profile, meaning that the shape of the solution will be skewed left or right half cosine. \par 
Thus, we conclude with a general rule that if the first mode of the instability results from an even physical number $n_0$ (i.e., mod$(n_0,2)=0$), then the profile, near the criticality, is symmetric with respect to $L/2$, in the interval $[0, L]$, while in the opposite case the profile is symmetric in the interval $[-L, L]$. 
\begin{figure}[t!]
\begin{center}
\hspace*{-1cm} 
\begin{picture}(350,150)
\includegraphics[width=13cm]{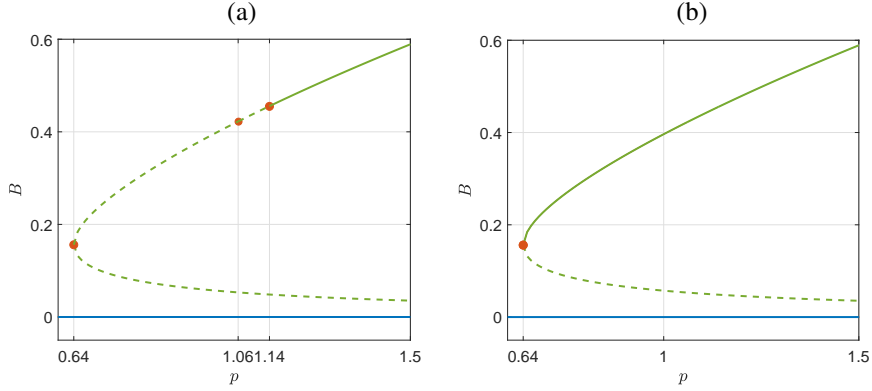}% This is a *.eps file\put(-320,220){(a)}
\put(-270,140){(a)}
\put(-100,140){(b)}
\end{picture}
\end{center}
\caption{Bifurcation diagram of homogeneous solutions with respect to the precipitation rate $p$. Solid lines correspond to stable and dash to unstable state respectively. There are two sets of solutions. The first one is the bare soil branch ($B=0$) and the second set of homogeneous solutions with $B>0$ (as it is resulted from eq. \eqref{eq:biomass_expil}). The second set consists of 2 branches which are bifurcated at the critical value $p_{c_0}=0.64$. Depending on the size of domain $L$ the second set of solutions with $B>0$ shows different stability properties \textbf{(a)} For $L=8$, the upper branch loosing stability at  
(stable and unstable) On the stable branch of solutions with $B>0$ two critical values of $p$ are marked with red circles $p_{c_2}=1.06$ and $p_{c_1}=1.14$. These  values remark the onset of new inhomogeneous solutions, as we show in section \ref{sec:Tur_exis}}
\label{fig:bifanalytic}            
\end{figure}

\section{Symmetry properties of the vegetation dynamics model}
%Let us write Eq. \ref{eq:NBC} in more general form as:
%
%\textcolor{red}{(it is the same as eq. \eqref{eq:reac_diff}, should all be \textbf{u}? and the D matrix exist also in section 3.1)} with no-flux boundary conditions:
%
For every non-homogeneous solution $\boldsymbol{u}(x, t)$ of Eq. \eqref{eq:reac_diff}, there exists a solution $\boldsymbol{u}(x',t)$, in which $x'$ is obtained from x by the action of a symmetry group G defined as:
\begin{equation}
x'=\gamma x, \quad \forall \gamma \in G.
\end{equation}
Thus, the generic steady-state bifurcation from the homogeneous solution is always a pitchfork \cite{golubitsky2003symmetry}.\par 
In the above system, in the domain $[-L, \quad L]$,  the no-flux boundary conditions result to a $O(2)$ symmetry, thus being in a one-to-one correspondence with the domain $[0,\quad L]$.\par
It can be easily shown that Eq.\ref{eq:system} are invariant under the $Z_2$ reflection symmetry:
\begin{equation}
x \rightarrow L-x.
\end{equation}
As a consequence, the generic steady-state bifurcation emanating from the homogeneous solution is a pitchfork \cite{golubitsky2003symmetry,spiliotis2018analytical}.

\section{Numerical results} 
In this section, we first investigate the dynamics of the system \eqref{eq:system}, \eqref{eq:NBC} using numerical simulations. The previous analysis revealed the existence of critical values of 
the precipitation rate $p$, where the homogeneous solution loses stability due to Turing points. However, as discussed, the linear analysis, does not provide any information for the type-profile of the new solutions (especially far from the bifurcation point). Furthermore, in many cases new types of inhomogeneous solutions arise from secondary bifurcations points far from the homogeneous solutions (see e.g. in pp.120 in \cite{Nic77}) leading to complex (ecological) patterns, which linear analysis can not predict. Thus, numerical simulations may be used as a first step to discover the new types of solutions, and eventually multistability regions. However, as this way of analysis may discover the existence of only (some) stable solutions, in the next section, we complete the study by extracting all the branches of stable and unstable solutions by exploiting the arsenal of numerical bifurcation analysis.\par
The reaction diffusion model \eqref{eq:system}, \eqref{eq:NBC} is solved numerical using central finite differences in space, thus partitioning the domain $[0,L]$ with $L=8$ into $N$ equal intervals of size $h=\frac{L-0}{N}$. Considering the Neumann boundary conditions, we get a system of  $3 \cdot N-3$ ODEs, reading:
%
%\begin{equation}
%    u_{xx}(x_i)\approx \frac{u(x_i+h)-2u(x_i)+u(x_i-h)}{h^2},
%\end{equation}
%Then eq.\ \eqref{eq:reac_diff} is transformed to:
%
\begin{equation}
    \frac{d{\mathbf{u}}}{dt}=\mathbf{f}(\mathbf{u},p).
    \label{eq:syst_ode_1}
\end{equation}
The resulting dynamical system of ODEs is solved using the Matlab ode23s solver suitable for stiff problems. For our computations, we have used $N=40$, and the default ode option for the relative and absolute error (relative error $10^{-6}$ and absolute error $10^{-6}$). Larger values of $N$ resulted, for all practical purposes, quantitatively to same results.\par 
For large values of the precipitation rate $p>p_{c_1}=1.14$, the ecosystem exhibits two stable homogeneous stationary states, one corresponding to the homogeneous vegetated  state and the other corresponding to the bare soil solution. As the values of precipitation rate $p$ decreases, and in a perfect agreement  with the linear analysis, the homogeneous vegetated solution loses its stability (through  a Turing bifurcation at $p_{c_1}=1.14$, (see section \ref{sec:sec3anal}). As a consequence, depending on the initial conditions, the system may converge to one of two new types of bell-shaped and inverted bell-shaped symmetric but inhomogeneous solutions for the biomass $B$. These two solutions are reported in Fig.\ \ref{fig:bell1.1}(a,c), and they are obtained with initial conditions which are perturbations, in the center of the domain, in respect to the homogeneous solution: one positive (Fig. \ref{fig:bell1.1}(a)) and one negative perturbation, see Fig. \ref{fig:bell1.1}(c),  respectively.
%\begin{figure}[t!]
%\begin{center}
%\begin{picture}(350,150)
%\includegraphics[width=12cm]{contrhom.eps}% This is a *.eps file\put(-320,220){(a)}
%\put(-320,150){(a)}
%\put(-190,150){(b)}
%\put(-80,150){(c)}
%\end{picture}
%\end{center}
%\caption{Temporal Simulation for $p=1.7$. After transient time of 100 (days),the system converges to homogeneous steady states. \textbf{(a)} Biomass and toxicity (right y-axis) \textbf{(b)} Water.\textbf{(b)} Toxicity.}
%\label{fig:hom1.7}            
%\end{figure}
\begin{figure}[t!]
\begin{center}
\begin{picture}(350,320)
\includegraphics[width=12cm]{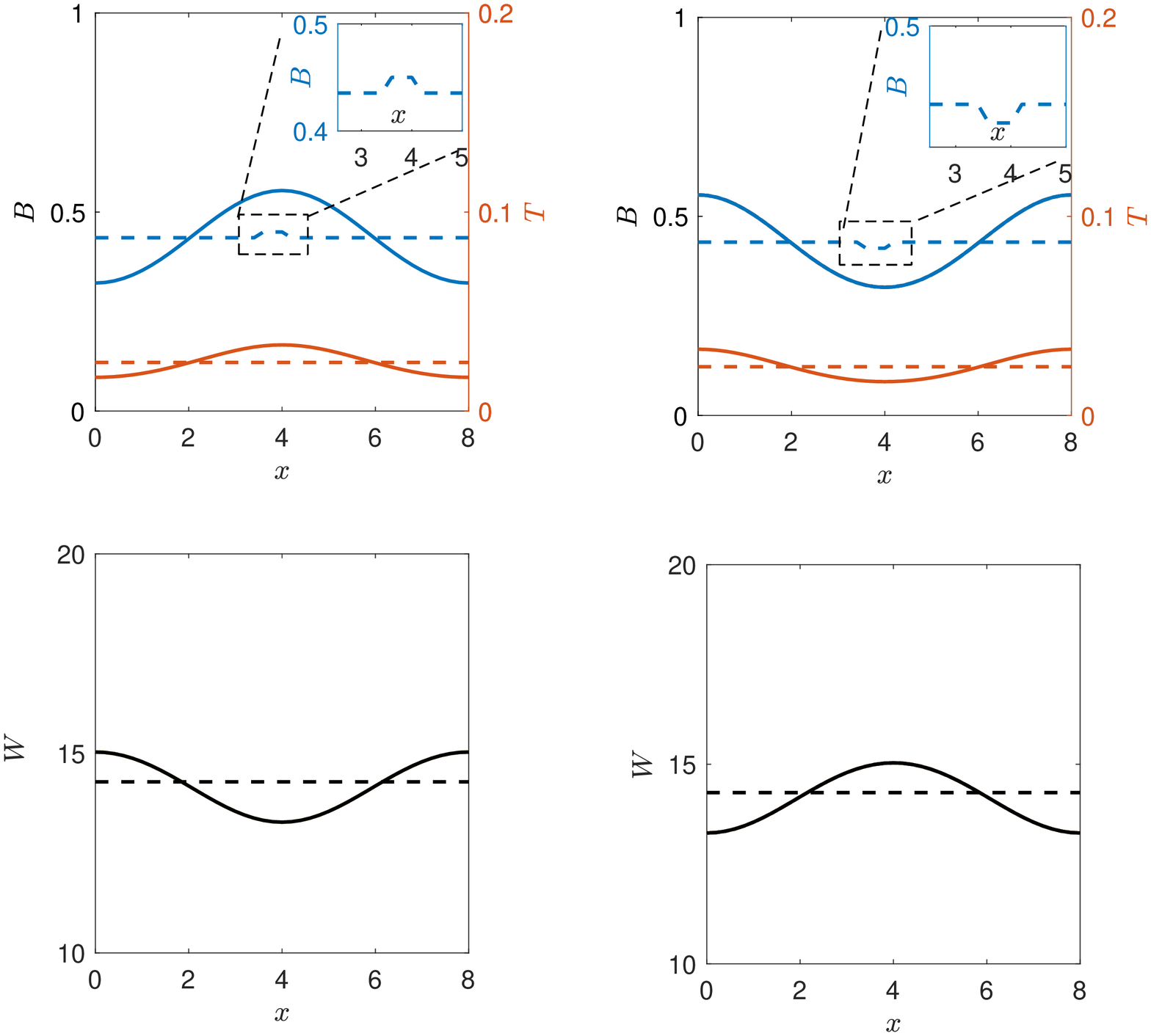}% This is a% This is a *.eps file\put(-320,220){(a)}
%\put(-280,200){perturbation up}
\put(-255,310){(a)}
\put(-90,310){(c)}
\put(-255,152){(b)}
\put(-90,152){(d)}
%\put(-120,235){perturbation down}
\end{picture}
\end{center}
\caption{Evolution of the perturbed homogeneous solutions after the first bifurcation point of Fig.\ \ref{fig:bifanalytic}(a) at $p=1.1$. Dashed lines correspond to initial conditions, while solid lines depict the final steady state solution profile.\textbf{(a, b)} The biomass and toxicity profiles show a symmetric bell-shaped profile \textbf{(a)}, while the water shows an inverted bell-shaped profile  \textbf{(b)}. The homogeneous solutions are also given for comparison purposes. The inset depicts the results obtained by perturbing upwards the homogeneous solution at the center of the domain (i.e., we initialise as: $\textbf{x}_\text{{init}}=1.1 \cdot \textbf{x}_\text{{hom}}$, $x\in [3.8, 4.2]$). The initial value and the steady state solution (dashed-dot) of the toxicity is depicted in the right y-axis. \textbf{(c, d)} The biomass and toxicity profiles corresponding to inverted bell-shaped profiles, when the perturbation of the initial conditions is oriented down. The inset shows the perturbation of the homogeneous solution oriented down, i.e., $\textbf{x}_\text{{init}}=0.9 \cdot \textbf{x}_\text{{hom}}$, $x\in [3.7, 4.2]$). \textbf{(d)} The  water mass corresponding to a symmetric $\Lambda$ shape.}
\label{fig:bell1.1}            
\end{figure}
%\begin{figure}[h!]
%\begin{center}
%\includegraphics[width=12cm]{biomas_Toxp11up.eps}% This is a *.eps file
%\end{center}
%\caption{Temporal Simulation for p=1.1. The system also depicts symmetric inhomogeneous solution of fig 2. (inverted v shape).}
%\label{fig:temp011up}            
%\end{figure}
A further decrease of  the precipitation rate $p$ value, results to another critical transition around $p_{c_{3}}=0.99$. in particular, the bell-shaped solution looses the stability and two asymmetric conjugate inhomogeneous solutions appear. These new couple of solutions are shown in Fig. \ref{fig:asym.95} where the regime profiles are plotted for $p=0.95$, for different initial conditions.
We consider the initial conditions two perturbations of bell-shaped solution for biomass on left or right (zoom box in Fig. \ref{fig:asym.95}(a) and Fig. \ref{fig:asym.95}(c), respectively).
After a transient time, the system converges to two different regime stable solutions reported in  Fig. \ref{fig:asym.95}(a),(c).
%
%\textcolor{blue}{(COMMENTA QUI FIGURA 5, CON PERTUBARZIONE CONDIZIONE INIZIALE DELLA V E PORTA A SOLUZIONE STABILE DELLA V)}
We comment here that although the bell-shaped solution disappears after the critical point around $p_{c_{3}}=0.99$, the inverted bell-shaped solution remains stable  until the value $p_{c_{4}}=0.91$ (see Fig.\ \ref{fig:downbell0.95}). For lower values of $p$ (i.e., $p<p_{c_{4}}$) the inverted bell-shaped solution is vanished and the system exhibits only skewed left or skewed right solutions.
%
%\textcolor{blue}{Costas. Please check if as I have rephrased it is ok} independently of small initial perturbation
 
\begin{figure}[t!]
\begin{center}
\begin{picture}(350,320)
\includegraphics[width=12cm]{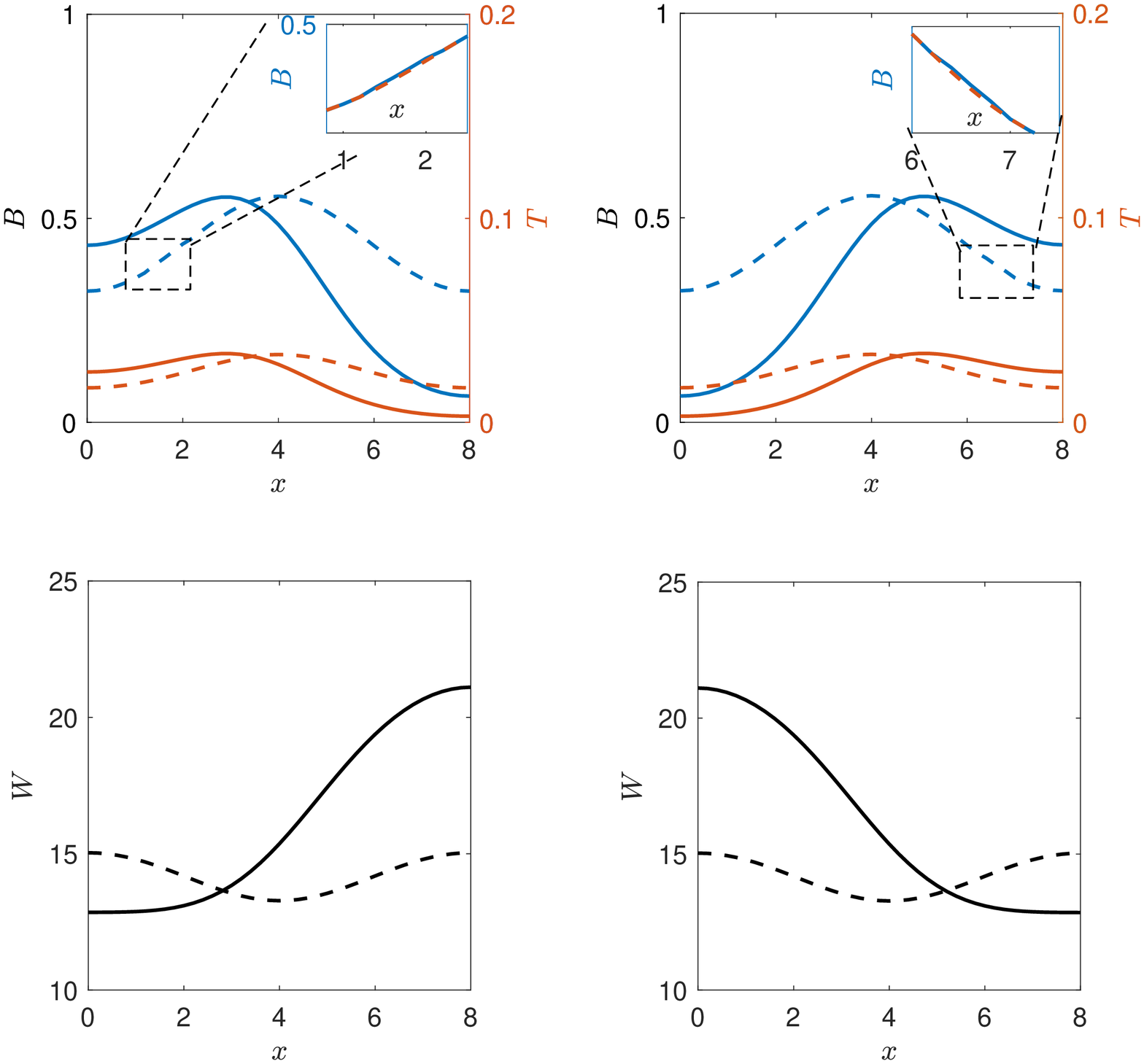}% This is a% This is a *.eps file\put(-320,220){(a)}
\put(-255,303){(a)}
\put(-90,303){(c)}
\put(-255,153){(b)}
\put(-90,150){(d)}
%\put(-295,235){perturbation left}
%\put(-115,235){perturbation right}
\end{picture}
\end{center}
\caption{Profile of solutions close to a critical value where the system dynamics deviates from the bell-shaped solution of Fig.\ \ref{fig:bell1.1} i.e., simulations for  $p=0.95$. Dashed lines correspond to the initial conditions, while solid lines depict the final steady states solutions. \textbf{(a)} Biomass, starting from initial condition close to solution of Fig. \ref{fig:bell1.1}(a); after a transient period, the system converges to a skewed left inhomogeneous solution. The inset shows the initial condition, which is the bell-shaped of Fig. \ref{fig:bell1.1}(a) (as red dash line) perturbed on the left side of the domain (i.e., $\textbf{x}_\text{{init}}=1.1 \cdot \textbf{x}_\text{{Bell}}$ for $x\in [1.1, 2.1]$).\textbf{(b)} Water mass dynamics exhibits a right asymmetric behavior. 
\textbf{(c)} A biomass skewed right inhomogeneous solution appears when the perturbation of the initial conditions is oriented right-up.\textbf{(d)} Water mass dynamics exhibits an opposite left asymmetric behavior. }
\label{fig:asym.95}            
\end{figure}

\begin{figure}[t!]
\begin{center}
\begin{picture}(350,190)
\includegraphics[width=12cm]{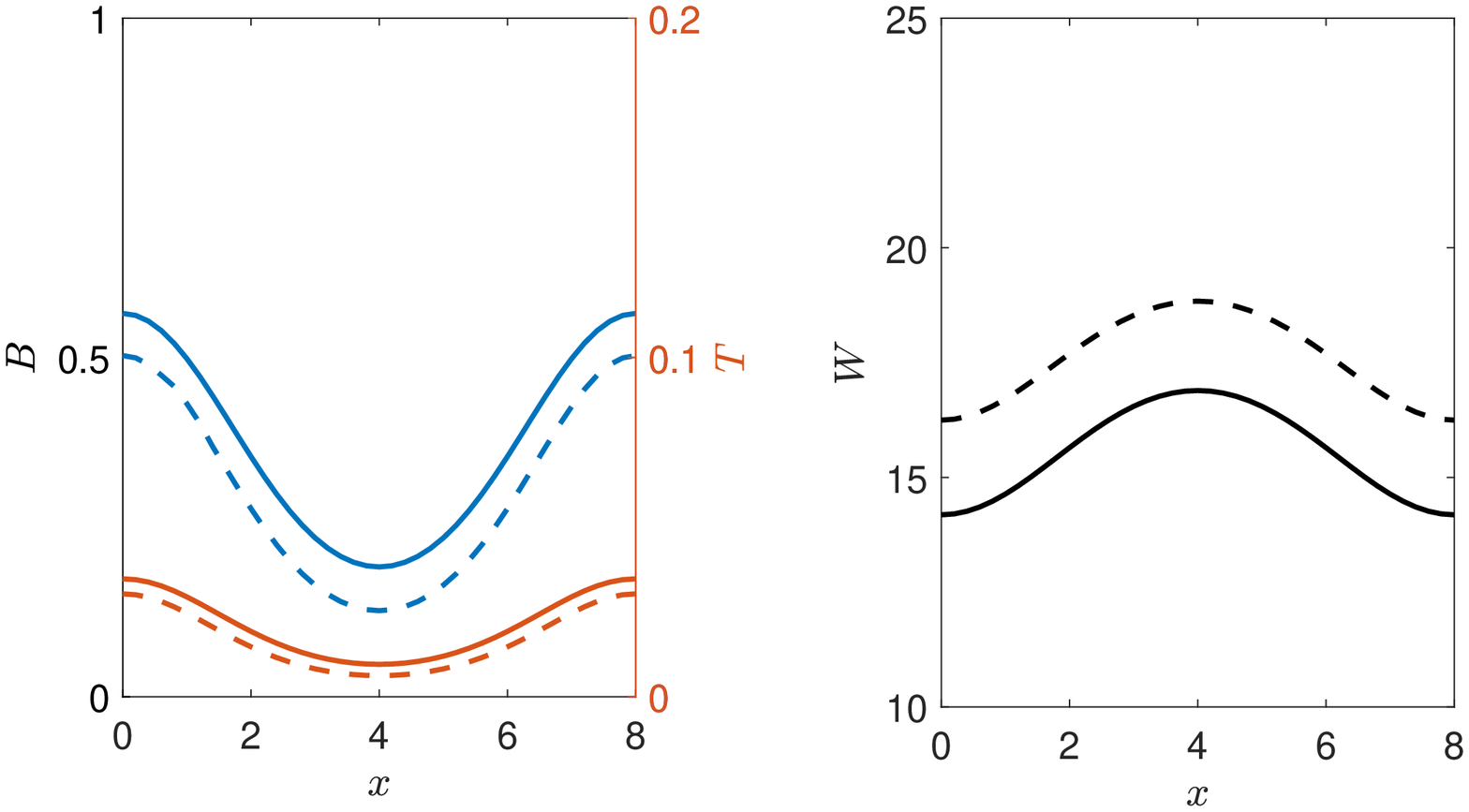}% This is a *.eps file\put(-320,220){(a)}
\put(-250,180){(a)}
\put(-90,180){(b)}
\end{picture}
\end{center}
\caption{Persistence of the inverted bell-shaped solution. Temporal simulation for $p=0.95$. Dashed lines correspond to the initial conditions, while solid lines depict the final steady state solutions. In contrast to the bell-shaped solution (Fig. \ref{fig:asym.95}(a),(b) where the bell-shapes solution loses stability at $p=0.99$), the inverted bell-shaped solution keeps its stability until $p=0.91$, \textbf{(a)} for biomass and toxicity and \textbf{(b)} for water.}
\label{fig:downbell0.95}            
\end{figure}
 % \begin{figure}[h!]
%\begin{center}
%\includegraphics[width=12cm]{biomas_Toxp095right.eps}% This is a *.eps file
%\end{center}
%\caption {Temporal Simulation for p=0.95. Starting from initial condition close to solution of fig.3 and after transient period, the dynamics converges to skewed right inhomogeneous solution. }
%\label{fig:temp095right}            
%\end{figure}
 %\begin{figure}[h!]
%\begin{center}
%\includegraphics[width=12cm]{biomas_Toxp095left.eps}% This is a *.eps file
%\end{center}
%\caption { Temporal Simulation for p=0.95. Starting from initial condition close to solution of fig.3 and after transient period, the dynamics converges to skewed left inhomogeneous solution.}
%\label{fig:temp095left}            
%\end{figure}
 %Fig. \ref{fig:asym.80}). 
This kind of solutions exist for even smaller values of $p$ and finally at some critical point (around $p=0.44$) the system depicts only the bare soil homogeneous solution ($B=0$ and $T=0$) which is permanent as $p \rightarrow 0$.

\section{Numerical Bifurcation Analysis}

%The nonlinear form of the model is prohibitive for analytical closed form solutions, thus dictating the use of numerical analysis for the systematic study of the mechanisms that give birth to the inhomogeneous branches of solutions in the parameter space. 
In order to systematically discover and trace both stable and unstable branches of steady state solutions, that are unreachable using the linear analysis of section \ref{sec:sec3anal} or with numerical temporal simulations resented in the previous section, and to accurately estimate the location of the critical points which mark the onset of phase transitions we resorted to the arsenal of numerical bifurcation theory. %The one dimensional domain is discretized with $N=40$ equidistant nodes. Denser discretization resulted to quantitatively equivalent outcomes.
%\textcolor{red}{missing an arclength scheme}
For the transformed system of Eq.\ \eqref{eq:syst_ode_1} the steady states are computed as solution of equation:
   \begin{equation}
   \mathbf{f}(\mathbf{u},p)=0,
   \label{eq:con_f}
\end{equation}
 The numerical bifurcation analysis is implemented with the aid of MatCont \cite{Matcont,matcont2}.
The Matcont algorithm is based on a predictor-corrector method \cite{Matcont,matcont2}. Suppose that we have detected a point 
$\mathbf{x}_i=(\mathbf{u}_{i},p_i)$ along the curve which is defined from eq.\ \eqref{eq:con_f}, also let $\mathbf{v}_{i}$ a normalized tangent vector  at $\mathbf{x}_{i}$ , i.e. $\mathbf{f}_x(\mathbf{x_i}) \cdot \mathbf{v}_i=0$, and $||\mathbf{v}_{i}||=1$. The computation of the point $\mathbf{x}_{i+1}$ is made in two steps, first using a predictor (predicting a new point) and then, correcting the new point using Newton iterations.

As a predictor $\widetilde{\mathbf{x}}_{i+1}$, we used a point on the tangent direction, i.e.:
 \begin{equation}
 \widetilde{\mathbf{x}}_{i+1}=\mathbf{x}_{i}+h \mathbf{v}_{i},
,\end{equation}
where $h$ is a small-selected step. The correction uses an augmented with one equation Newton scheme. We add  the equation 
\begin{equation}
   g(\mathbf{x})=(\mathbf{x}-\widetilde{\mathbf{x}}_{i+1})\cdot \mathbf{v}_i=0,
   \label{eq:pseodo_arc}
\end{equation}
which is the well-known pseudo-arc-length continuation scheme, according to which, the final point results as the intersection of the  hyperplane passing through $\widetilde{\mathbf{x}}_{i+1})$ and the tangent predictor, i.e.,:
\begin{equation}
   \mathbf{x}^{k+1}= \mathbf{x}^{k}-\mathbf{F}_{x}^{-1}(\mathbf{x}^k)\mathbf{F}(\mathbf{x}^k),
   \label{eq:Newton_pseodo_arc}
\end{equation}
with  $\mathbf{F}=(\mathbf{f},g)^{T}$ and $\mathbf{F}_x$ is the Jacobian matrix of $\mathbf{F}$.
The Newton-Raphson iterations termination criteria are the function and the step tolerance with tolerances set less than a specific value (here at $10^-6$) $||\mathbf{F}(\mathbf{x}^{k+1})|| < \epsilon_1$ and an additional accuracy condition $|| \delta \mathbf{x} || < \epsilon_2$, where $\delta \mathbf{x}$ is the last Newton-Raphson correction.
\begin{figure}[t]
\begin{center}
\hspace*{-4.6cm}
\begin{picture}(350,250)
\includegraphics[width=17cm]{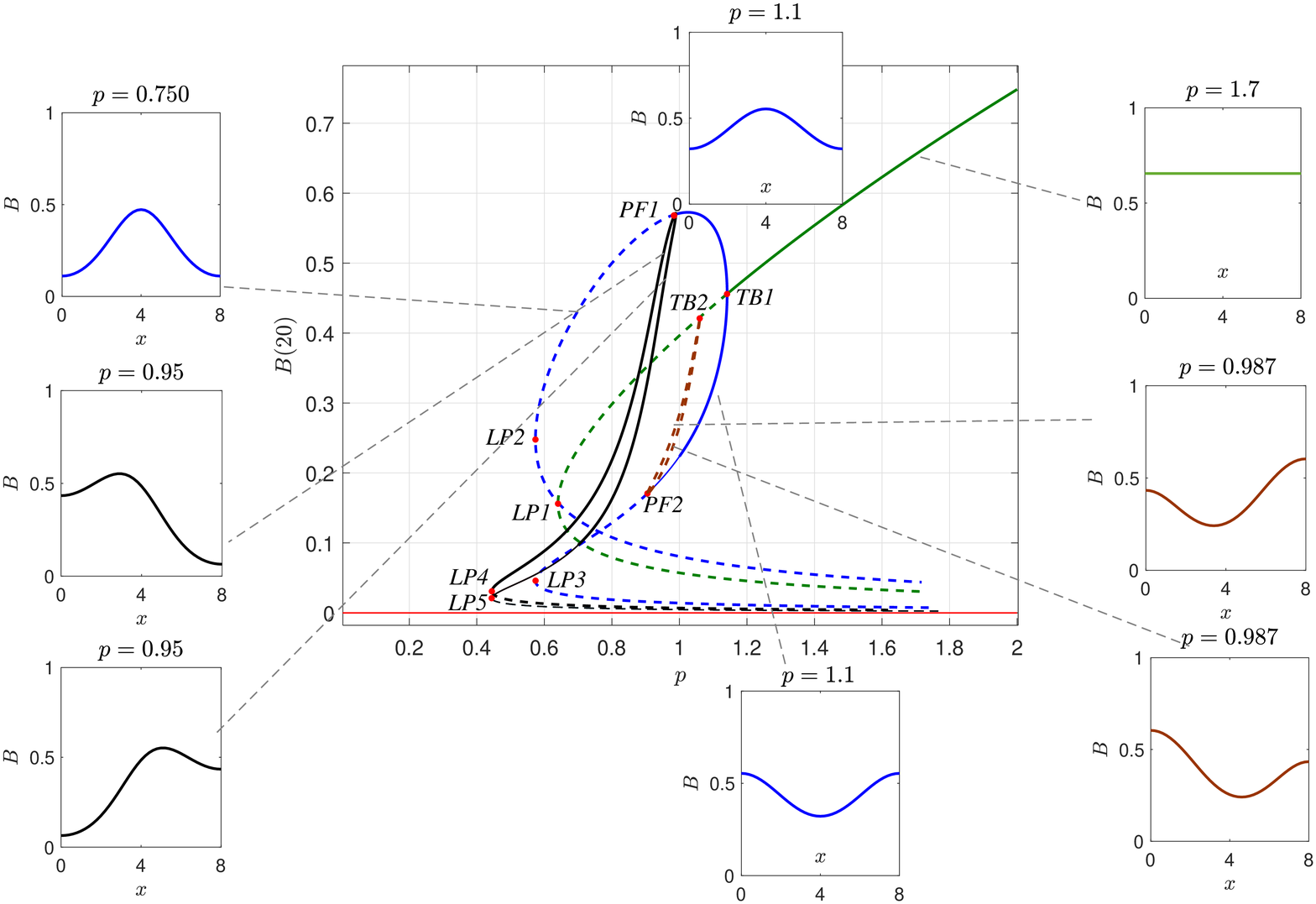}% This is a% This is a *.eps file\put(-320,220){(a)}
%\put(-320,300){(a)}
%\put(-170,300){(b)}
%\put(-320,150){(c)}
%\put(-170,150){(d)}
\end{picture}
\end{center}
\caption{Bifurcations diagram with respect to the precipitation rate $p$. Solid lines correspond to stable states and dashed lines to unstable states. The system manifests rich nonlinear dynamics, with symmetric and asymmetric solutions, multistability and symmetry breaking bifurcations. The gray dashed lines depict the corresponding  biomass spatial profiles of the steady state solutions. There are five branches: green and red which correspond the homogeneous profiles, the blue branch which stands for the bell and inverted bell-shaped solutions, the black which is the skewed and left and right profile and deep red unstable branch with asymmetric  (almost inverted bell-shaped) solutions. The description and the exact positions of bifurcation points highlighted with labels are given in Table \ref{tab:tip_points} }
\label{fig:Biftot}            
\end{figure}
\begin{table}[t]
\centering
\begin{tabular}{ p{1.8cm}||p{6cm}|p{2cm} }
 \hline
 %\multicolumn{3}{|c|}{Country List} \\
 \hline
 symbol in Figure \ref{fig:Biftot} & Description of symbols appears in fig. & critical value \\
 \hline \hline 
 $LP1$ & saddle node bifurcation of homogeneous solution & $p_{c_0}=0.64$\\
$TB1$ & first Turing instability of homogeneous solution with $B>0$ & $p_{c_1}=1.14$ \\
$TB2$ & second Turing instability of homogeneous solution  with $B>0$ & $p_{c_2}=1.06$\\
$PF1$ & pitchfork bifurcation of the bell-shaped profile & $p_{c_3}=0.99$\\
$PF2$ &  pitchfork bifurcation of the inverted bell-shaped profile & $p_{c_4}=0.91$\\
$LP2$ & bell-shaped saddle node bifurcation &$p_{c_5}=0.54$\\
$LP3$ & inverted bell-shaped saddle node bifurcation  & $p_{c_5}=0.54$\\
$LP4$ & skewed left asymmetric profile saddle node bifurcation  & $p_{c_6}=0.44$ \\
$LP5$ & skewed right asymmetric profile saddle node bifurcation  & $p_{c_6}=0.44$ \\
\hline
\end{tabular}
 \caption{Critical values of bifurcation points (or tipping points) as they appear in Fig.\ \ref{fig:Biftot}}
 \label{tab:tip_points}
\end{table}
\subsection{The Bifurcation Diagram} 

Fig.\ \ref{fig:Biftot} depicts the resulting bifurcation diagram. Characteristic  profiles of the solutions along the branches are also shown as insets. Starting from $p=2$ and going downhill, the system shows only two branches of stable homogeneous solutions, one with $B>0$ and the second branch with $B=0$.
 %(as \ref{fig:hom1.7}).  
 At a critical point  $p_{c_1}=1.14$ (marked as $TB1$, in Fig.\ \ref{fig:Biftot}), corresponding to a Turing bifurcation, the homogeneous solution (with $B>0$) loses its stability and gives birth to two new inhomogeneous solutions of a bell-shaped and inverted bell-shaped (see insets and Fig.\ \ref{fig:bell1.1}). %
 %\textcolor{red}{Fig. 12(a) and (b) are the homogeneous profile and the leading eigenvalues before and after the pitchfork  bifurcation PF. Clearly one leading eigenvalue passes the y axis (Fig.12b)}. 
 Decreasing more the value of $p$, the branch of homogeneous solutions remains unstable and on this branch, at the point $p_{c_2}=1.06$, a second point of Turing instability appears (marked as $TB2$, in Fig.\ \ref{fig:Biftot}).
 %furcate again through saddle node bifurcations (LP1) at $p_{\=c}$  (where another eigendirection became unstable). %\textcolor{red}{Fig. 12(c) and (d) are the homogeneous profile and the leading eigenvalues before and after the saddle node bifurcation. }
At this second Turing bifurcation point ($TB2$), two new unstable branches of non-homogeneous solutions appear. Finally, the homogeneous unstable branch bifurcates through a saddle node bifurcation at $p_{c_0}=0.64$  (marked as $LP1$, in Fig.\ \ref{fig:Biftot}).\par 
The bell-shaped and inverted bell-shaped solutions, which emerge from, $TB1$ are stable. The upper branch (with the bell-shaped patterns) remains stable until the  critical point $p_{c_3}=0.99$ (marked as $PF1$ in Fig.\ \ref{fig:Biftot}).  Then, the solution on this branch loses its stability and bifurcates with two new branches of inhomogeneous solutions, which are symmetrically conjugated. This type of secondary bifurcation can not be predicted from the linear analysis of the homogeneous solution. Remarkable, the inverted bell-shaped patterning keeps stability until $p_{c_4}=0.91$  (marked as $PF2$ in Fig.\ \ref{fig:Biftot}). Thus, the unstable branches emerging from $TB2$ connect $PF2$ and $TB2$ points. Finally, both unstable branches of bell-shaped and inverted bell-shaped patterns experience a saddle node bifurcation at the critical value $p_{c_5}=0.54$ (marked as $LP2$ and $LP3$ in Fig.\ \ref{fig:Biftot}).\par
%For lower values of $p$ this branch gives a saddle node bifurcations at (marked as LP2, fig.11) without change stability. The V shape solution does not loses stability at BP1 (it is a symmetry break?) but for lower values and at   at BP2 change to unstable state. This branch provides one more a saddle node bifurcation (due to symmetry at the same coordinate as LP2 ) without change stability. 
%Figure 12. Profiles for homogeneous solution and the corresponding eigenvalues of Jacobian before and after the critical points of fig.11 (a) before and after the PF point (b) (after the PF) (c) before the LP1 and (d) after the LP1.
%Figure 13. Profile of non homogenous v and v inverted type, and the corresponding eigenvalues of Jacobian before and after the critical points (a) before and (b) after the PF point of fig. 11.(c) Before the BP1 for v inverted and (d) after the BP1.
%Figure 14. Profile of the second branch of non homogenous solutions, and the corresponding eigenvalues of Jacobian before and after the critical points (a) After the BP1 skewed right (b) after the BP1 point of fig. 11. skewed left (c) Before the LP3 and (d) after the LP3.
Furthermore, at the  point $PF1$, two new stable branches of inhomogeneous solutions arise. The profiles are skewed left and right solutions (see also Fig.\ \ref{fig:asym.95}). These branches lose stability under a saddle-node bifurcation, which takes place at the critical value $p_{c_6}=0.44$ marked as $LP4, LP5$ in Fig.\ \ref{fig:Biftot}. The profile of solutions is depicted with black color in the insets of Fig. \ref{fig:Biftot}.\par   %\textcolor{red}{in fig.14. In (a) and (b) the system=m shows 2 symmetric stable solutions and in (c) and (d) the profile and the eigenvalues before and after the LP3.}
%\subsection{Regions of multi-stability}
Concluding, the system reveals a rich nonlinear dynamical behavior characterized by symmetry and symmetry breaking bifurcations and coexistence of multiple stable and unstable regimes. For $p \rightarrow 0$ the system exhibits only stable bare-soil solutions. Multistability is observed from $LP4$ to $PF2$ with three stable solutions (the bare-soil and two symmetrically conjugate solutions (depicted with black color lines in the insets  of Fig. \ref{fig:Biftot}. Whereas from $PF2$ to $PF1$ the system provides four stable regimes (bare soil, two inhomogeneous symmetrically conjugate solutions and the inverted bell-shaped solution). From $PF1$ to $TB1$ there are three stable solutions (inverted bell-shaped, bell-shaped and bare soil solutions) and finally after $TB1$ we have the two homogeneous solutions, corresponding to the vegetated and soil solutions 

In Fig.\ref{fig:symetBell}, we illustrate the symmetry breaking-symmetry  of the solutions. A a consequence of the Turing bifurcation at the point $p_{c_1}=1.14$ (marked as TB1), there is a symmetry breaking of the homogeneous solution and two new solutions appear (bell-shaped and inverted bell-shaped profiles in Fig.\ref{fig:symetBell}(a)). These solutions near the TB1 exhibit a symmetry, that one is the reflection of the other around the homogeneous solution. However, this symmetry is not preserved far from the TB1 (where the linearization is not valid and nonlinearity becomes significant), as it is shown in Fig.\ref{fig:symetBell}(b). Furthermore,in Fig.\ref{fig:symetBell}(c-d) are shown solutions arised from $PF1$ the  symmetric between them, reflecting a conjugate symmetric pattern.            
\section{Discussion}
%(Critica ai quasi-steady state approach)
%(Abbiamo pattern ma senza Turing condition)
% (Risultato del moto perpetuo e forma degli spot)
We performed bifurcation analysis of a biomass-water-toxicity model with respect to the precipitation. The model consists of a set of two PDEs and one ODE that describe qualitatively the pattern formation in semi-arid zones as the precipitation decreases before the occurrence of desertification.  We first performed a linear stability analysis for the solution branch of the homogeneous state to provide analytically: (a) the conditions for the appearance of Turing bifurcations that mark the onset of pattern formation, and, (b) the dependence of the Turing bifurcations on the size of the domain. From these critical points, arise two inhomogeneous solution branches, which are symmetrical to the axis of the homogeneous solution. This is a known symmetry-breaking phenomenon, due to the Turing bifurcation, which with the zero flux boundary conditions has the characteristic of a pitchfork bifurcation \cite{dillon1994pattern,krause2021modern,woolley2022boundary}. 

Here, we argue, based on numerical evidence, that the Turing-type symmetry breaking is fundamentally different from the symmetry-breaking bifurcations encountered in dynamical systems with $R^2$ symmetry. In particular, the numerical bifurcation analysis, reveals also pitchfork bifurcations which break the reflection symmetry induced by the boundary conditions. Differently from the patterns arising from the Turing-Pitchfork-type bifurcation arising from  zero-flux boundary conditions, here the reflection-conjugate patterns experience the same bifurcations and stability and they always show-up in pair. As discussed also in Krause et al. \cite{krause2021modern}, while the linear stability analysis is formally valid around the Turing bifurcation from the homogeneous solution, it does not provide any information about possible subsequent bifurcations away from the uniform-equilibrium solution. In fact, we show that after the initial Turing symmetric instability, a secondary bifurcation arises which splits the solution branches in two distinct, unstable, asymmetric steady states, followed by a reverse asymmetric Turing bifurcation, in which the
asymmetric equilibrium branches gains again stability. A similar mechanism has been observed in a two-layer model consisting of a pair of coupled reaction-diffusion equations \cite{yang2004symmetric}. Regarding the vegetation pattern formation, such asymmetric  patterns have been observed in response to localized differences in soil-water availability \cite{tarnita2017theoretical}. 
\begin{figure}[t!]
\begin{center}
\hspace*{-.7cm} 
\begin{picture}(350,330)
\includegraphics[width=14cm]{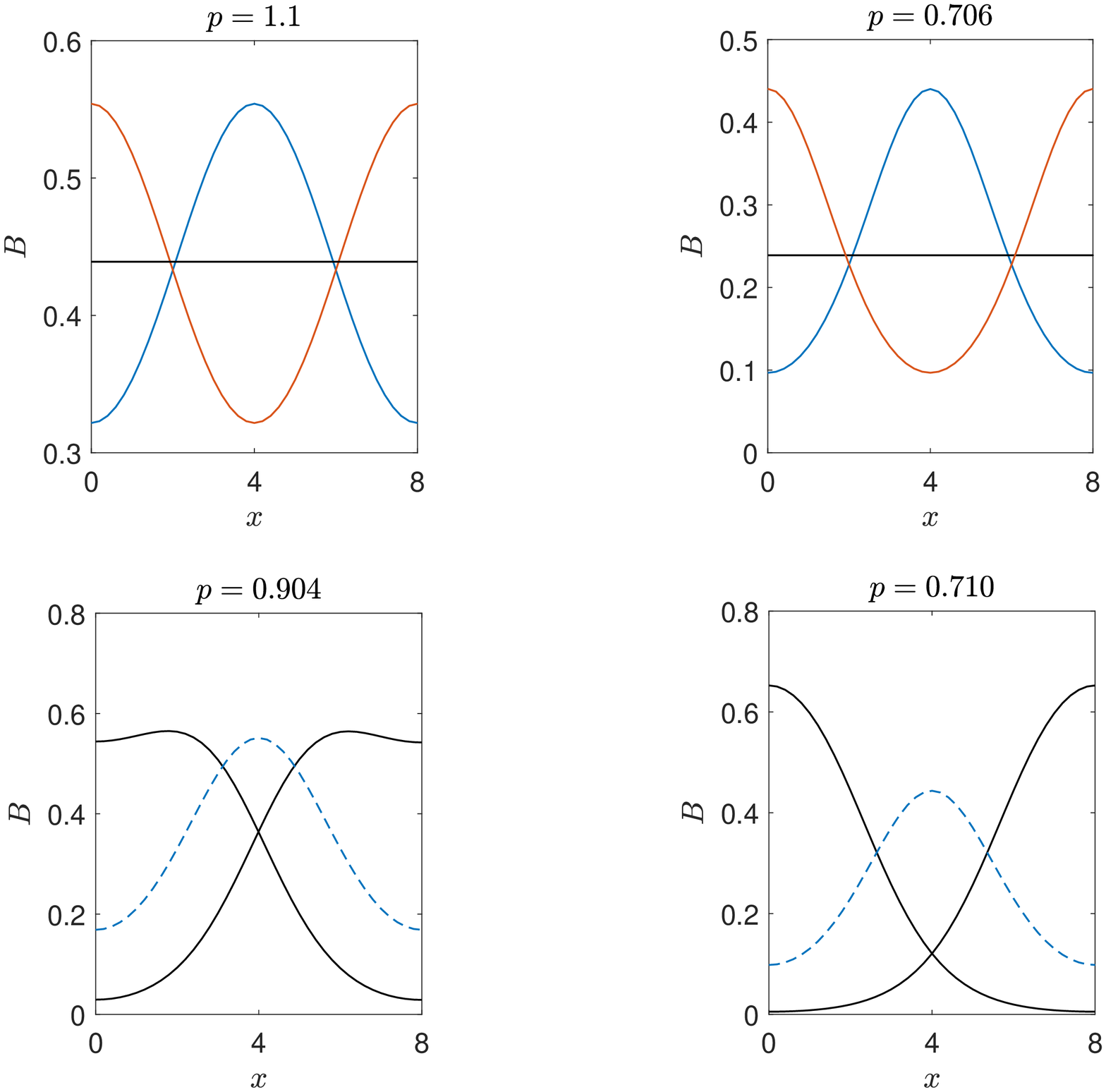}
% This is a% This is a *.eps file\put(-320,220){(a)}
%\put(-320,300){(a)}
\put(-310,320){(a)}
\put(-120,320){(b)}
\put(-310,150){(c)}
\put(-120,150){(d)}
\end{picture}
\end{center}
\caption{Symmetry and symmetry-breaking solutions. \textbf{(a)} Near the critical point, $TB1$ both bell-shaped and inverted bell-shaped solutions are symmetric with respect to the homogeneous solution (constant horizontal line with black colour). \textbf{(b)} Far from the $TB1$, the profiles are not any more symmetric. \textbf{(c,d)} Symmetry preserved along the branches of skewed-left and skewed-right inhomogeneous solution (black branches of Fig. \ref{fig:Biftot}). After the point, $PF1$, the two branches (black colour) preserve their symmetry.}
\label{fig:symetBell}            
\end{figure}
%\begin{figure}[ht]
%\begin{center}
%\begin{picture}(350,150)
%\includegraphics[width=12cm]{symetrybreakleftright.eps}% This is a% This is a *.eps file\put(-320,220){(a)}
%\put(-320,300){(a)}
%\put(-250,185){(a)}
%\put(-90,185){(b)}
%\put(-170,150){(d)}
%\end{picture}
%\end{center}
%\caption{Symmetry preserved along the branches of skewed left and right inhomogeneous solution (black branches of Fig. \ref{fig:Biftot}). After the point PF1 the two branches (black colour) preserved their symmetry}
%\label{fig:symetleftright}            
%\end{figure}   
\clearpage
\section*{Declarations}
\subsection*{Conflicts of interest/Competing interests} The authors have no conflict of interests to disclose
\subsection*{Availability of data and material (data transparency)} Not applicable

\bibliographystyle{unsrt}

\bibliography{references}  %%% Remove comment to use the external .bib file (using bibtex).
%%% and comment out the ``thebibliography'' section.

%%% Comment out this section when you \bibliography{references} is enabled.
%\begin{thebibliography}{1}

%\bibitem{kour2014real}
%George Kour and Raid Saabne.
%\newblock Real-time segmentation of on-line handwritten arabic script.
%\newblock In {\em Frontiers in Handwriting Recognition (ICFHR), 2014 14th
 % International Conference on}, pages 417--422. IEEE, 2014.

%\bibitem{kour2014fast}
%George Kour and Raid Saabne.
%\newblock Fast classification of handwritten on-line arabic characters.
%\newblock In {\em Soft Computing and Pattern Recognition (SoCPaR), 2014 6th
 % International Conference of}, pages 312--318. IEEE, 2014.

%\bibitem{hadash2018estimate}
%Guy Hadash, Einat Kermany, Boaz Carmeli, Ofer Lavi, George Kour, and Alon
 % Jacovi.
%\newblock Estimate and replace: A novel approach to integrating deep neural
 % networks with existing applications.
%\newblock {\em arXiv preprint arXiv:1804.09028}, 2018.

%\end{thebibliography}

\end{document}